\documentclass{aims}
\usepackage{amsmath}
\usepackage{amssymb}
\usepackage{mathtools}
\usepackage{paralist}
\usepackage{comment}
\usepackage{subcaption}
\usepackage{algorithm}
\usepackage{algpseudocode}
\usepackage{graphics} 
\usepackage{epsfig} 
\usepackage{graphicx}  
\usepackage[colorlinks=true]{hyperref}
\hypersetup{urlcolor=blue, citecolor=red}

  \textheight=8.2 true in
   \textwidth=5.0 true in
    \topmargin 30pt
     \setcounter{page}{1}


\def\bs{\boldsymbol}
\newcommand{\rom}[1]{\uppercase\expandafter{\romannumeral #1\relax}}
\DeclarePairedDelimiter\ceil{\lceil}{\rceil}


\newtheorem{theorem}{Theorem}[section]

\theoremstyle{definition}

\title[Detecting phase transitions in collective behavior] 
      {Detecting phase transitions in collective behavior using manifold's curvature}

\author[Kelum Gajamannage and Erik M. Bollt]{}

\subjclass{Primary: 53C15, 53C21; Secondary: 58D15.}
 \keywords{Phase transition, manifold, collective behavior, dimensionality reduction, curvature.}

 \email{dineshhk@clarkson.edu}
 \email{ebollt@clarkson.edu}
 
\thanks{The authors were supported by the NSF grant CMMI-1129859. Erik M. Bollt was supported by the Army Research Office grant W911NF-12-1-276 and Office of Naval Research grant N00014-15-2093.}

\thanks{$^*$ Corresponding author.}

\begin{document}
\maketitle

\centerline{\scshape Kelum Gajamannage$^*$}
\medskip
{\footnotesize
 \centerline{Department of Mathematics}
   \centerline{Clarkson University}
   \centerline{Potsdam, NY-13699, USA}
} 

\medskip

\centerline{\scshape Erik M. Bollt}
\medskip
{\footnotesize
 \centerline{Department of Mathematics}
   \centerline{Clarkson University}
   \centerline{Potsdam, NY-13699, USA}
}

\bigskip

 \centerline{(Communicated by the associate editor name)}

\begin{abstract}
If a given behavior of a multi-agent system restricts the phase variable to a invariant manifold, then we define a phase transition as change of physical characteristics such as speed, coordination, and structure. We define such a phase transition as splitting an underlying manifold into two sub-manifolds with distinct dimensionalities around the singularity where the phase transition physically exists. Here, we propose a method of detecting phase transitions and splitting  the manifold into phase transitions free sub-manifolds. Therein, we utilize a relationship between curvature and singular value ratio of points sampled in a curve, and then extend the assertion into higher-dimensions using the shape operator. Then we attest that the same phase transition can also be approximated  by singular value ratios computed locally over the data in a neighborhood on the manifold. We validate the phase transitions detection method using one particle simulation and three real world examples.
\end{abstract}

\section{Introduction}\label{sec:introduction}
Multi-agent systems such as crowds of people \cite{helbing1997modelling, musse1997model, zhao2004tracking}, schools of fish \cite{gerlai2010high, partridge1982structure}, flocks of birds \cite{ballerini2008empirical, nagy2010hierarchical}, colonies of molds \cite{rappel1999self} and ants \cite{rauch1995pattern} often exhibit discrete phase transitions  due to variations of interaction among members \cite{beekman2001phase}. Specially, detecting phase transitions in crowds of people \cite{almeida2013change} is a popular research problem \cite{andrade2006hidden}. Abrupt changes upon variation of some parameters such as speed, coordination, and structure \cite{becco2006experimental, millonas1992swarms, vicsek1995novel} shift the phase of the system from one state to another \cite{deutsch1999principles, sole1996phase}. Numerous types of swarm decisions which determine the group dynamics are not only influenced by the intrinsic  social interaction among members \cite{couzin2005effective}, but also by some outside factors such as threats \cite{sumpter2008information} and presence of predators or food sources \cite{toffin2009shape}.  However, with a majority of data is recorded as videos, the classical approach of detecting phase transitions by means of tracking individuals and monitoring their dynamics is constrained by the size of the group and the scale of the problem \cite{mehran2009abnormal}. Being inspired by manifold representation of collective behavior, here we develop a method of detecting phase transitions in a multi-agent system.

In manifold theory, we can reveal an invariant manifold $\mathcal M$, in an abstract higher-dimensional space which describes the collective behavior of a group such that each frame partitioned from the collective behavior video corresponds to a point $\bs{p}$ on the manifold \cite{abaid2012topological}. In this setup, the whole group evolves according to the underlying flow
\begin{equation}
\Phi:{\mathcal M}\rightarrow {\mathcal M} \ ; \ \ \Phi(\bs{p}^{(t_1)})=\bs{p}^{(t_2)},
\end{equation}
where $\bs{p}^{(t_1)}$ and $\bs{p}^{(t_2)}$ are two points representing two consecutive frames at time-steps $t_1$ and $t_2$, respectively \cite{gajamannage2015dimensionality}.  Due to an abrupt behavioral change of the group, this mapping switches from one phase space to another and indicates a phase transition of the motion. Thus, in the presence of a phase transition, the system can be represented as two distinct sub-manifolds, $\mathcal{M}^{(j)}$ for $j=1$ and $2$, along with singularities where the phase transition physically exists \cite{gajamannageidentifying}. Herein, the most salient scenario is that the sub-manifolds intersect and make a locus $\mathcal{L}$ of singularities as
\begin{equation}\label{eqn:locus}
\mathcal{L}=\cap \mathcal{M}^{(j)}.
\end{equation}

As an example, we estimate dimensionalities of two distinct phases, walking and running, of a crowd of people given as a video in \cite{detection}. Two phases of the motion are embedded into two distinct manifolds as shown in Figure ~\ref{fig:crowd_walk_run}(a). We utilize an established dimensionality reduction routine called Isomap \cite{tenenbaum2000global} to obtain corresponding scaled residual variances of two embedding spaces (Fig. ~\ref{fig:crowd_walk_run}(b)) of each phase. Figure ~\ref{fig:crowd_walk_run}(b) shows that the two phases are embed on manifolds with different dimensionalities. This example acts as a proof-of-concept for developing an routine to detect phase transitions. Detecting phase transitions should be naively implemented on videos before utilize dimensionality reduction schemes such as Isomap \cite{tenenbaum2000global}, Local Linear Embedding \cite{roweis2000nonlinear}, as those may otherwise contain phase transitions.

\begin{figure}[hpt]
        	\centering
	\begin{subfigure}[b]{0.5\textwidth}
        	\includegraphics[width=1\textwidth]{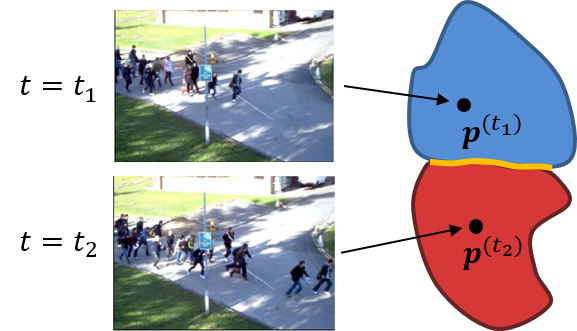}
        	\caption*{(a)}        	
    	\end{subfigure}   
	\quad
    	\begin{subfigure}[b]{0.4\textwidth}
	\vspace{20pt}
        	\includegraphics[width=1\textwidth]{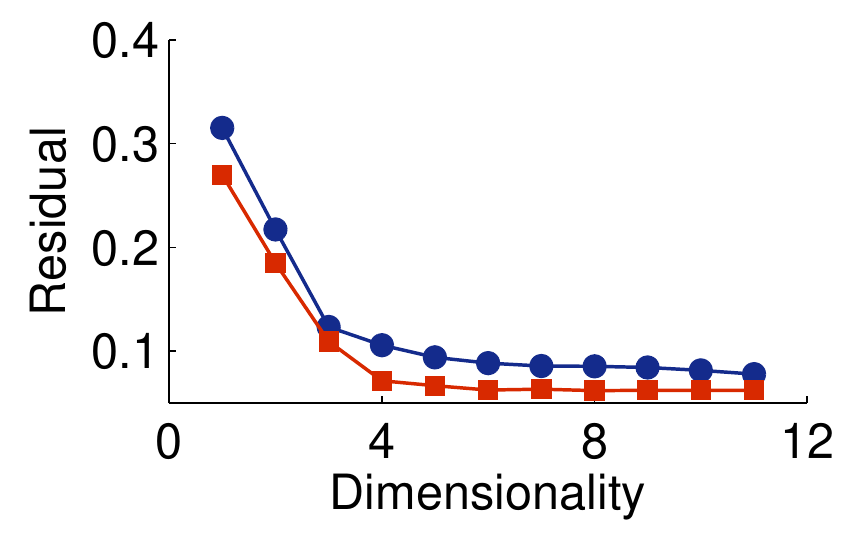}
        	\caption*{(b)}       	
    	\end{subfigure}        	
        	\caption{An abrupt phase change of the crowd behavior where a walking crowd suddenly starts running \cite{detection}. (a) The first phase of the motion (walking) is embedded onto the blue colored manifold, while the second phase (running) is embedded onto the red colored manifold. Two snapshots showing walking and running at time steps $t_1$ and $t_2$ are embedded onto points $\bs{p}^{(t_1)}$ and $\bs{p}^{(t_2)}$, respectively, in the corresponding manifolds. The locus of singularities ($\mathcal{L}$) is represented by orange color.  (b) The scaled residual variance with respect to the dimensionality, which the dimensionality of the underlying manifold is given by an elbow, is obtained by running Isomap upon frames in each phase with 6 nearest neighbors. Embedding dimensionalities of sub-manifolds representing walking (blue circle) and running (red square) of the crowd are three and four respectively. 
}\label{fig:crowd_walk_run}
\end{figure}

A phase transition in a multi-agent system is defined as a switching of the current smooth embedding manifold into another smooth manifold with different  dimensionality as trajectories of agents evolve in the phase space. Our approach of detecting phase transitions is based on revealing high curvature on the manifold which differentiate phases of the motion. We hypothesize that a phase transition is manifested in the form of change in local curvature that can be detected by a ratio of singular values computed on points sampled on the manifold. We first formulate this concept in two dimensions by proving a relation between curvature and singular value ratio in a curve, and then extend it to higher-dimensions by using the shape operator. Then, we justify that the same phase transition can also be detected by analyzing the ratios of the smallest singular value to the largest singular value which are computed locally upon neighborhoods of points sampled on the manifold. Based on the distribution of a moving sum of absolute moving difference of the singular value ratios, phase transitions are detected and their magnitudes are ordered.

This paper is organized as follows: Section ~\ref{sec:method} describes the method of detecting phase transitions. In Section ~\ref{sec:algorithm}, the method along with the detailed algorithm of detecting phase transitions in collective behavior is presented. Section ~\ref{sec:examples} describes the performance of the method using one synthetic dynamical simulation of the Vicsek model and three natural experimental data sets, a crowd of human, a flock of birds, and a school of fish. We conclude the work in Section ~\ref{sec:conclusion_discussion} with a discussion of the method including the performance and future work.

\section{Method of detecting phase transitions}\label{sec:method}
We declare that, in presence of a phase transition, the curvature of the underlying manifold is abruptly changed. Therein, we first approximate the point-wise curvature of a curve in two dimensions and then extend it to higher dimensions using the shape operator. Finally, we attest that the same phase transition can also be approximated locally by singular value ratios computed over the data sampled on the manifold. 

\subsection{Approximating the curvature of a curve}
Point-wise curvature of a curve is approximated  by using singular value ratio computed over the data sampled in a neighborhood at the point. We assume that the data is evenly distributed along the curve. As in Figure ~\ref{fig:trans_circle}(a), we can superimpose a neighborhood at any given point $\bs{p}$ on the curve with an arc $\bs{p}_1\bs{p}\bs{p}_2$ of a translating circle such that the arc subtends a small angle of $2T$ at the center $\bs{O}$. We consider the curvature of a translating circle instead of that of the curve since they are similar in aforesaid setup. Thus, we prove the assertion providing the relationship between curvature and  singular value ratios over the data sampled on a circle. 

\begin{figure}[hpt]
        	\centering
	\begin{subfigure}[b]{0.52\textwidth}
        	\includegraphics[width=1\textwidth]{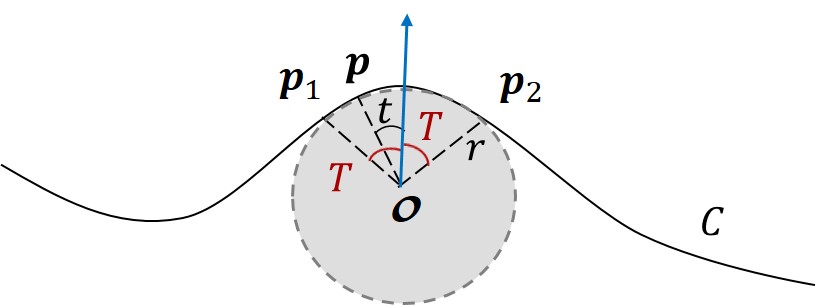}
        	\caption*{(a)}        	
    	\end{subfigure}   
	\quad
    	\begin{subfigure}[b]{0.44\textwidth}
	\vspace{20pt}
        	\includegraphics[width=1\textwidth]{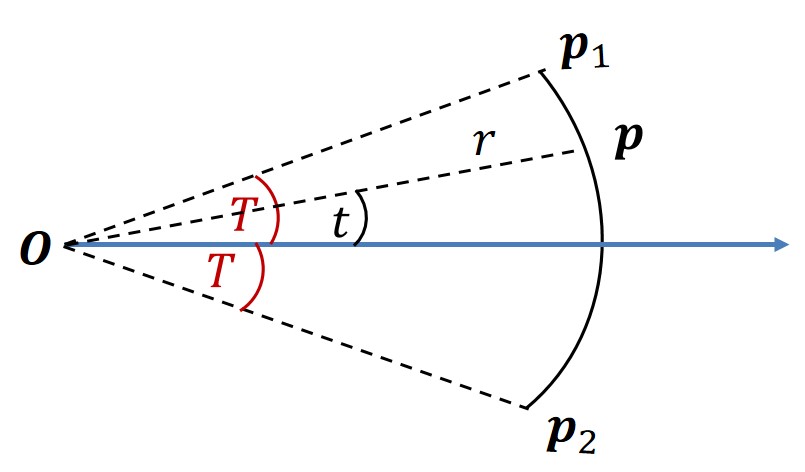}
        	\caption*{(b)}       	
    	\end{subfigure}        	
        	\caption{(a) Superimposing a neighborhood of the curve $C$ at point $\bs{p}$ with an arc  $\bs{p}_1\bs{p}\bs{p}_2$ which subtend an small angle of $2T$ at the origin of a translating circle. (b) Zoomed and rotated secular sector $\bs{p}_1\bs{p}_2\bs{O}$, in (a) such that the blue arrow is horizontal.}
\label{fig:trans_circle}
\end{figure}

\begin{theorem} \label{thm:curvature}
Given a circle centered at the origin with radius $r$, let $\alpha$ number of points are uniformly distributed with density $\rho$ on an arc which subtends an angle of $2T$ at the center. Let $\sigma_1$ and $\sigma_2$ ($\sigma_1 > \sigma_2$), are two singular values computed upon the data on the curve, then $\sigma_2/\sigma_1 \approx M\kappa$ where $\kappa=1/r$ and $M=\frac{\alpha}{2\sqrt{15}\rho} \in \mathbb{R}^+$.
\end{theorem}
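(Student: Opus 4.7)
The plan is to parametrize the arc by angular offset $\theta \in [-T,T]$ about its midpoint (as suggested by the rotated sector in Figure~\ref{fig:trans_circle}(b)), Taylor-expand the coordinates for small $T$, center the resulting data matrix, and then read off $\sigma_1^2$ and $\sigma_2^2$ from what turns out to be an approximately diagonal Gram matrix. With midpoint on the horizontal axis, the $i$-th sample point is $(r\cos\theta_i, r\sin\theta_i) \approx (r - r\theta_i^2/2,\ r\theta_i)$. After subtracting the column means $\bar x \approx r - \tfrac{r}{2}\overline{\theta^2}$ and $\bar y \approx 0$, the centered coordinates become $\tilde x_i \approx -\tfrac{r}{2}(\theta_i^2 - \overline{\theta^2})$ and $\tilde y_i \approx r\theta_i$.

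Next I would convert the discrete sums to integrals using the uniform-density hypothesis: because density is given per unit arc length, $\sum_i f(\theta_i) \approx \rho r \int_{-T}^T f(\theta)\,d\theta$. Taking $f\equiv 1$ immediately yields $\alpha = 2\rho r T$, equivalently $T = \alpha/(2\rho r)$, which is the substitution that converts $T$ into the constants appearing in $M$. The cross term $\sum \tilde x_i \tilde y_i$ has an odd integrand in $\theta$ and thus vanishes to leading order, so $X^\top X$ is approximately diagonal and its two diagonal entries are $\sigma_1^2$ and $\sigma_2^2$.

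A short moment computation gives
\begin{equation*}
\sigma_1^2 \approx r^2 \sum \theta_i^2 \approx \tfrac{2}{3}\rho r^3 T^3,
\qquad
\sigma_2^2 \approx \tfrac{r^2}{4}\Bigl(\sum\theta_i^4 - \alpha\,\overline{\theta^2}^{\,2}\Bigr) \approx \tfrac{2}{45}\rho r^3 T^5,
\end{equation*}
where the factor $1/45$ in $\sigma_2^2$ arises from the combination of the fourth moment $\int_{-T}^T \theta^4\,d\theta = 2T^5/5$ with the correction $-\alpha(\overline{\theta^2})^2 = -2\rho r T^5/9$ forced by centering. Taking the ratio eliminates $\rho r^3$ and leaves $\sigma_2^2/\sigma_1^2 \approx T^2/15$, so $\sigma_2/\sigma_1 \approx T/\sqrt{15}$. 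Substituting $T = \alpha/(2\rho r)$ produces $\sigma_2/\sigma_1 \approx \alpha/(2\sqrt{15}\,\rho r) = M\kappa$ with $M = \alpha/(2\sqrt{15}\,\rho)$ and $\kappa = 1/r$, as claimed.

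The main obstacle I anticipate is the centering step: without subtracting $\overline{\theta^2}$ the leading-order contribution to $\sigma_2^2$ would be of order $r^2 T^3$ instead of $r^2 T^5$, which would destroy both the correct $T^2$ scaling and the clean constant $1/\sqrt{15}$. Everything else is bookkeeping in powers of $T$, and the approximation is controlled in the regime where $T$ is small enough that the osculating-circle picture of Figure~\ref{fig:trans_circle}(a) is valid and the $O(\theta^3)$ Taylor remainders are negligible compared to the retained terms.
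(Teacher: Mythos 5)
Your proposal is correct and follows essentially the same route as the paper: both compute the second moments of uniformly distributed data on the arc about the bisecting axis, observe that the cross-moment vanishes by odd symmetry, obtain $\sigma_2/\sigma_1 \approx T/\sqrt{15}$, and substitute $T=\alpha/(2\rho r)$. The only cosmetic difference is that you Taylor-expand the coordinates before taking moments and work with the centered Gram matrix, whereas the paper integrates the exact trigonometric covariances and expands to $O(T^6)$ afterward; the centering subtlety you rightly flag is exactly what the paper's subtraction of $\mu_{x_1}$ accomplishes.
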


\begin{proof} 
We rotate the circular sector $\bs{p}_1\bs{p}_2\bs{O}$ such that the line bisecting the angle $\bs{p}_1\bs{p}\bs{p}_2$ which is shown by a blue arrow in Figure ~\ref{fig:trans_circle}(a) is horizontal. We consider this blue arrow as the horizontal axis and compute the singular values of the data sampled from the arc. A point $\bs{p}(x_1,x_2)$ on the arc is given by $x_1=r \cos{t}$ , $x_2=r \sin{t}$ for $t\in[-T,T]$. Expected values of variables $x_1$ and $x_2$ are computed as
\begin{equation}\mu_{x_1} =\frac{1}{2T}\int_{-T}^{T} r\cos{t} \ dt = \frac{1}{T}r \sin{T}\end{equation} and
\begin{equation}\mu_{x_2}=\frac{1}{2T}\int_{-T}^{T} r\sin{t} \ dt = 0,\end{equation} respectively. We then compute pairwise covariance between variables as
\begin{equation}cov(x_1,x_1)=\frac{1}{2T}\int_{-T}^{T} (r \cos{t}-\mu_{x_1})^2dt=\frac{r^2}{2T}\left(T+\frac{1}{2}\sin{2T}+\frac{2}{T} \sin^2{T}\right),\end{equation}
\begin{equation}cov(x_1,x_2)=\frac{1}{2T}\int_{-T}^{T} (r \cos{t}-\mu_{x_1})(r \sin{t}-\mu_{x_2})dt=0,\end{equation}
\begin{equation}cov(x_2,x_1)=\frac{1}{2T}\int_{-T}^{T} (r \sin{t}-\mu_{x_2})(r \cos{t}-\mu_{x_1})dt=0,\end{equation} and 
\begin{equation}cov(x_2,x_2)=\frac{1}{2T}\int_{-T}^{T} (r \sin{t}-\mu_{x_2})^2dt=\frac{r^2}{4T}(2T-\sin{2T}).\end{equation}

The covariance matrix $\Sigma$, of the data is
\begin{equation}\Sigma
=\left(\begin{matrix} \frac{r^2}{2T}\Big(T+\frac{1}{2}\sin{2T}+\frac{2}{T} \sin^2{T}\Big) && 0 \\ 0 && \frac{r^2}{4T}(2T-\sin{2T}) \end{matrix}\right).
\end{equation}
The covariance matrix which is approximated to $5^{th}$ order of $T$ by using the Taylor's expansion is
\begin{equation}\tilde{\Sigma} = \left(\begin{matrix}
\frac{r^2 T^4}{45} && 0 \\ 0 && \frac{r^2 T^2}{3}\left(1-\frac{T^2}{5}\right)
\end{matrix}\right)+O (T^6).
\end{equation} 
Let eigenvalues of $\tilde{\Sigma}$ are $\lambda_1$ and $\lambda_2$ ($\lambda_1 > \lambda_2$), then
\begin{equation}\lambda_1\approx \frac{r^2 T^2}{3}\left(1-\frac{T^2}{5}\right) \ \text{and} \ \ \lambda_2\approx \frac{r^2 T^4}{45}.
\end{equation} 

As eigenvalues are computed upon the covariance matrix, they also relate to principal components. We denote singular values associated with principal components by $\sigma_1$ and $\sigma_2$ ($\sigma_1 > \sigma_2$) and relate them to eigenvalues as $\sigma_2/\sigma_1=\sqrt{\lambda_2/\lambda_1}$ \cite{gerbrands1981}. Then, 
\begin{equation}\frac{\lambda_2}{\lambda_1}\approx \frac{T^2}{3(5-T^2)} \implies \frac{\sigma_2}{\sigma_1}\approx \frac{T}{\sqrt{3(5-T^2)}}.
\end{equation}
Further,
\begin{equation}\label{eqn:sigma_ratio}\frac{\sigma_2}{\sigma_1} \approx \frac{T}{\sqrt{15}}\end{equation}
since $T$ is small.
Let, $\alpha$ points are uniformly distributed with the density $\rho$ on the arc, then,
\begin{equation}\label{eqn:value_T}
T=\frac{\alpha}{2\rho}\kappa \ , \ \text{where} \ \kappa=\frac{1}{r}.
\end{equation}
By (\ref{eqn:sigma_ratio}) and (\ref{eqn:value_T}),
\begin{equation}
\frac{\sigma_2}{\sigma_1} \approx M \kappa \ \text{where} \ M = \frac{\alpha}{2\sqrt{15}\rho} \in \mathbb{R}^+.
\end{equation}
\qedhere
\end{proof}

\subsection{Approximating the curvature of a manifold}\label{sub:curvature_mani}
We extend the two dimensional assertion into higher dimensions by intersecting principal sections, made by the shape operator, with the manifold. Extrinsic curvatures of a manifold in orthogonal tangential directions are measured using eigenvalues and eigenvectors of the shape operator \cite{o1966elementary,  rovenskii2011topics}, such that eigenvalues provide magnitudes and eigenvectors provide directions of principal curvatures along tangential directions \cite{alfred1998modern}. Bellow we explain the computation of the shape operator and principal sections.

Let $\mathcal{M}^m=\big(f_1(x_1, \dots, x_m), \dots,  f_m(x_1, \dots, x_m)\big)$ represents the parametric form of the manifold embedded in the Euclidean space $\mathbb{R}^{m+1}$. For $j=1, \dots, m$,
\begin{equation}
\bs{v}_{\bs{p}}^{(j)}=\frac{\partial \mathcal{M}^m}{\partial x_j} \hspace{20pt} \text{and} \hspace{20pt} \hat{\bs{v}}_{\bs{p}}^{(j)}=\frac{\bs{v}_{\bs{p}}^{(j)}}{\big\|\bs{v}_{\bs{p}}^{(j)}\big\|}
\end{equation}
provide mutually orthogonal tangential vectors and unit tangential vectors of $\mathcal{M}^m$ at $\bs{p}$, respectively. Thus, $\big\{\hat{\bs{v}}_{\bs{p}}^{(j)}\big| \ \forall \  j\big\}$ is an orthonormal basis for the tangential space at $\bs{p}$. The shape operator $\mathcal{S}_{\bs{p}}$, of the manifold $\mathcal{M}^{m}$ at the point $\bs{p}$ is defined as
\begin{equation}
\mathcal{S}_{\bs{p}}=\big(-\nabla_{x_{j_1}} N_{\bs{p}} \cdot \hat{\bs{v}}_{\bs{p}}^{(j_2)}\big)_{j_1,j_2} \hspace{20pt} \text{for} \hspace{20pt}  j_1, j_2=1, \dots, m
\end{equation}
 \cite{o1966elementary,  rovenskii2011topics}. 

Let \big($\kappa\big(\bs{u}_{\bs{p}}^{(j)}\big), \bs{u}_{\bs{p}}^{(j)}$\big) for $j=1, \dots, m$ denote eigenpairs  of $\mathcal{S}_{\bs{p}}$, then magnitudes and directions of principal curvatures are given by $\kappa\big(\bs{u}_{\bs{p}}^{(j)}\big)$ and $\bs{u}_{\bs{p}}^{(j)}$, respectively \cite{alfred1998modern}. The two-dimensional plane in $\mathbb{R}^{m+1}$ which is spanned by the principal direction $\bs{u}^{(j)}_{\bs{p}}$ and the unit normal at $\bs{p}$, $N_{\bs{p}}$, is defined as the $j$-th principal section,
\begin{equation}
\Pi_{\bs{p}}^{(j)}=\left\{\beta \bs{u}_{\bs{p}}^{(j)} + \gamma N_{\bs{p}} \vert \beta, \gamma \in \mathbb{R}\right\}.
\end{equation}
Thus, for $j=1, \dots, m$, $\Pi_{\bs{p}}^{(j)}$ are mutually orthogonal planes. A computational example of producing principal sections is attached in Appendix ~\ref{app:shape_operator}. Intersection of $\Pi_{\bs{p}}^{(j)}$'s with the manifold makes curves $C^{(j)}\in \mathbb{R}^{m+1}$ such that each passes through the point $\bs{p}$. Figure ~\ref{fig:curvature_local} illustrates principal sections and curves made through that when $m=2$.

\begin{figure}
        \centering
        \includegraphics[width=.75\textwidth]{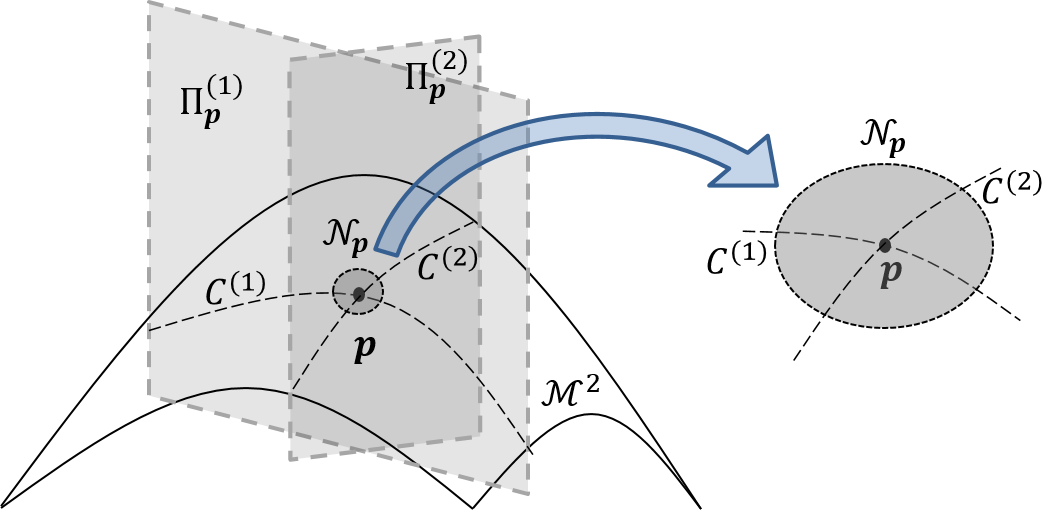}
        \caption{Local distribution of data around the point $\bs{p}$ on a two dimensional manifold ($\mathcal{M}^2$). Principal sections $\Pi^{(1)}_{\bs{p}}$ and $\Pi^{(2)}_{\bs{p}}$ are created by using the shape operator at $\bs{p}$, and curves $C^{(1)}$ and $C^{(2)}$ are produced by intersecting $\Pi^{(1)}_{\bs{p}}$ and $\Pi^{(2)}_{\bs{p}}$ with $\mathcal{M}^2$, respectively.}
\label{fig:curvature_local}
\end{figure}

Without loss of generality, for $j=1, \dots, m$, we assume that the data is distributed uniformly with a sufficient density  on $C^{(j)}$ to contain $\alpha$ points  in a neighborhood at the point $\bs{p}$. For all $j$, singular values $\sigma^{(j)}_1$ and $\sigma^{(j)}_2$ ($\sigma^{(j)}_1 > \sigma^{(j)}_2$) are computed upon the data sampled in the neighborhood of $\bs{p}$ on $C^{(j)}$. By Theorem ~\ref{thm:curvature}, curvature $\kappa^{(j)}$ on the curve $C^{(j)}$ is approximated as $\sigma^{(j)}_2/\sigma^{(j)}_1 \approx M^{(j)}\kappa^{(j)}$ for some $M^{(j)}\in \mathbb{R}^+$. A phase transition differentiates the manifold into two sub-manifolds such that each map with a different Euclidean space \cite{lee2012introduction}. Thus, under a phase transition, geometry permits that the curvature of some curves, $C^{(j)}$'s,  undergo abrupt changes which also result abrupt changes of the ratios $\sigma^{(j)}_2/\sigma^{(j)}_1$. 

$\textbf{A generic example.}$ As a simple geometric example to illustrate high curvature on a manifold at a phase transition, we use a three-dimensional joined-manifold that we call a `sombrero-hat'  (Fig. ~\ref{fig:example_sombrero}(a)) of 2000 points produced by the equations 
\begin{equation}\label{eqn:sombrero}
\begin{split}
x_1=R \ cos\theta, \\
x_2=R \ sin\theta, \\
\text{and} \ x_3 =
\begin{cases}
4-R^2, & \text{if }R\le2 \\
0, & \text{if }R>2
\end{cases}
\end{split}
\end{equation}
for $R\in \mathbb{U} [0,4]$ and $\theta\in \mathbb{U} [0,\pi]$. This sombrero-hat intersects two sub-manifold, brim (green) and crown (blue) at the locus of singularities (red) representing a phase transition. Instead of constructing principal sections using the shape operator, for simplicity, we intuitively find a principal section in this example at the point $(0, 0, 4)$. Since the curvature of the manifold is same in all tangential direction at this point, we choose one principal direction to be $\hat{\bs{i}}$, the unit vector along the $x_1$-axis. The unit normal at this point is $\hat{\bs{k}}$ which is the unit vector along the $x_3$-axis. Thus, we define the plane $\{\beta_1\hat{\bs{i}}+\beta_2\hat{\bs{k}} \vert \beta_1, \beta_2 \in \mathbb{R}\}$ as the principal section. Intersection of this principal section with the sombrero-hat gives a curve as shown in Figure ~\ref{fig:example_sombrero}(b) which has a high curvature at the red points. Isomap residual plots indicate different embedding dimensionalities at the locus than those of two sub-manifolds.
\begin{figure}[t]
        	 \centering
        	\includegraphics[width=1\textwidth]{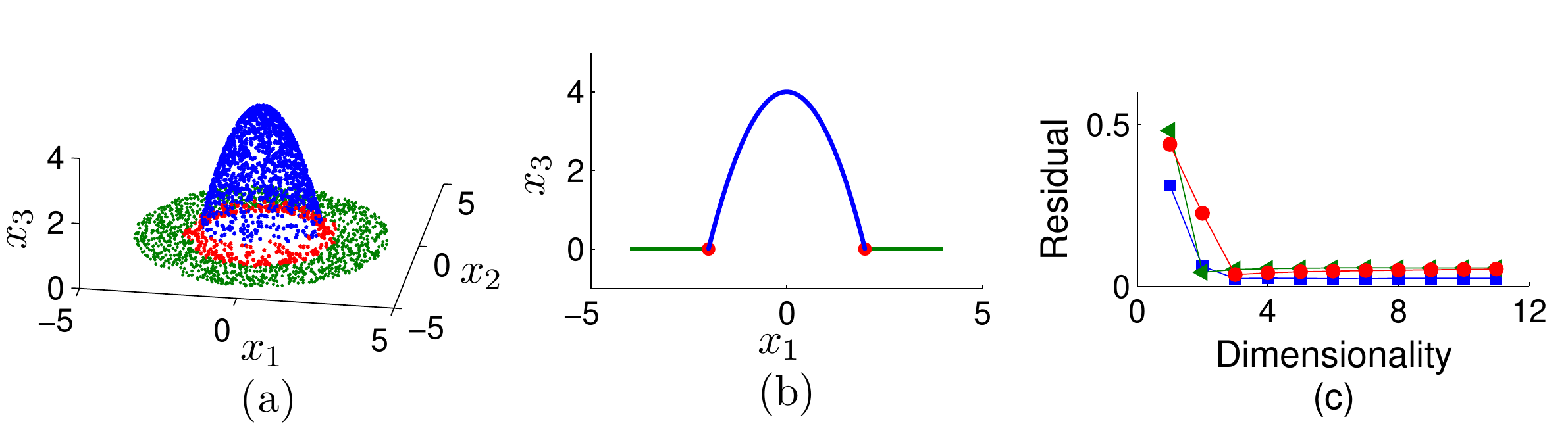}
        	\caption{(a) A three dimensional  sombrero-hat of 2000 points consisting two sub-manifolds (blue and green) and locus of singularities (red) is intersected with the plane $\{\beta_1\hat{\bs{i}}+\beta_2\hat{\bs{k}} \vert \beta_1, \beta_2 \in \mathbb{R}\}$ to produce (b) a curve in $\mathbb{R}^3$. (c) Isomap residual plots those show embedding dimensionalities by elbows reveal that the dimensionalities of two sub-manifolds are two while the dimensionality of the locus is three.}
\label{fig:example_sombrero}
\end{figure}

\subsection{Phase transition via local data distribution on the manifold}\label{sub:local_approximation}
As the construction of the shape operator at a neighborhood of each point on the manifold is computationally expensive, here we present an alternative approach to compute the singular value ratios and detect phase transitions. Therein, we first make a neighborhood  $\mathcal{N}_{\bs{p}}\in\mathbb{R}^{m+1}$ at each point $\bs{p}$, such that it contains $\alpha \in \mathbb{N}$ points by using nearest neighbor search algorithm given in \cite{friedman1977algorithm, yianilos1993data}. Then, we perform singular value decomposition\footnote{Singular value decomposition finds singular values $\sigma_1, \dots, \sigma_{\alpha}$ for some $\alpha \in \mathbb{N}$, and two unitary matrices $U$ and $V$, such that $U^{'}U=V^{'}V=I_{\alpha}$, those provide the decomposition $\mathcal{D}^{\alpha}_n=U\Sigma V^{'}$ for $\Sigma =\text{diag}(\sigma_1, \sigma_2, \dots \sigma_{\alpha})$ with $\sigma_1>\dots > \sigma_{\alpha}$ \cite{golub1970singular}.}
for data in each $\mathcal{N}_{\bs{p}}$ and denote the descending order by $\sigma_1, \dots, \sigma_\alpha$ for some $\alpha \in \mathbb{N}$. Without loss of generality, we assume that the data distribution in each $C^{(j)}$ is dense enough for $\mathcal{N}_{\bs{p}}$ to contain data from each curve $C^{(j)}$, $j=1, \dots, m$, made in Section ~\ref{sub:curvature_mani}. However, $\mathcal{N}_{\bs{p}}$ contains at least the point $\bs{p}$, from each $C^{(j)}$ as shown in Figure ~\ref{fig:curvature_local} which depicts the same scenario for a two dimensional manifold $\mathcal{M}^2$, in $\mathbb{R}^3$. 

According to this setting, we assert that an abrupt change which is detected by the ratio $\sigma^{(j)}_2/\sigma^{(j)}_1$ in some curves created through $\bs{p}$ is also detected by the ratio $\sigma_\alpha/\sigma_1$ computed over the data  in $\mathcal{N}_{\bs{p}}$ on the manifold $\mathcal{M}^m$. In this study, we use frames partitioned from a collective motion video as data. A group of agents evolving with mutual interaction can be explained in terms of a hidden manifold structure such that each frame corresponds to a single data point on the manifold. Thus, we consider the point $\bs{p}$ on the manifold corresponds to the $n$-th frame of the video. Let consider $N$ frames are embedded on the manifold, thus the distribution
\begin{equation}\label{eqn:sigma_distribution}
\{(\sigma_{\alpha}/\sigma_1)_{n} \ \vert \ n=1, \dots, N\},
\end{equation}
provides the magnitudes of the phase at each frame. Phase changes between consecutive frames are measured by moving differences of singular value ratios computed over neighborhoods of corresponding points as
\begin{equation}
t_n=(\sigma_{\alpha}/\sigma_1)_{n+1}-(\sigma_{\alpha}/\sigma_1)_{n} \ \ \text{for} \  \ n=1, 2, \dots, N-1. 
\end{equation}

As the distribution $t_n$ is highly volatile, we utilize $\alpha$-points moving sum
\begin{equation}
\Sigma_n^{\alpha}=\sum_{i\in\left[n-\ceil{\frac{\alpha}{2}}, n+\ceil{\frac{\alpha}{2}}\right] \cap \mathbb{N}} t_i \hspace{10pt} ; \hspace{10pt} n\in \left[\Big\lceil\frac{\alpha}{2}\Big\rceil, N-1-\Big\lceil\frac{\alpha}{2}\Big\rceil\right]\cap \mathbb{N},
\end{equation}
where $\ceil{\alpha/2}$ is the ceiling function\footnote{Ceiling function of $x$, denoted by $\ceil{x}$, is the largest integer less than or equal to $x$.}, and smooth the distribution. After a phase transition occurs, manifold curvature is abruptly changed and then so the singular values. Thus, a phase transition between $n$-th and $(n+1)$-th frames can be quantified by the magnitude of $\Sigma_n^{\alpha}$.
 
\section{Algorithm for detecting phase transitions}\label{sec:algorithm}
This algorithm requires two inputs, one is the parameter $\alpha$ for number of nearest neighbors and the other is the data matrix $\mathcal{D}$ constructed as bellow. We assume input data is gray-scale images partitioned from a video of collective behavior.  We reshape each frame into a row matrix and produce the data matrix $\mathcal{D}$ by concatenating each row matrix vertically such that $n$-th row in $\mathcal{D}$ represents the pixel intensities of the $n$-th frame for some $n$ \cite{bovik2010handbook}. Since the $n$-th row of $\mathcal{D}$, denoted by $\mathcal{D}(n,:)$, is the coordinates of the $n$-th point on the manifold, Euclidean distance, denoted by $d_{\mathcal{D}}$, between any two points $n$ and $n'$, on the manifold is computed as 
\begin{equation}
d_{\mathcal{D}}(n, n')=\|\mathcal{D}(n,:)-\mathcal{D}(n',:)\| \ ; \ n, n' = 1, \dots, N.
\end{equation}
Based on this distance, $\alpha$ nearest neighbors for the $n$-th point are extracted and denoted as the set $\mathcal{N}_n$ (same as $\mathcal{N}_{\bs{p}}$ in Section ~\ref{sub:local_approximation}). Then, we compute $\sum^\alpha_n$ over $\mathcal{N}_n$ as explained in the Section ~\ref{sub:local_approximation}. Algorithm outputs magnitudes of phase changes $\Sigma^{\alpha}_n$, so we can choose the largest among them as phase transitions. The method of detecting phase transitions is given as Algorithm ~\ref{alg:detecting_transitions}.

\begin{algorithm*}
\caption{ \textit{Phase Transition Detection in Collective Behavior}}\label{alg:detecting_transitions}
\begin{algorithmic}[1]
\Statex{\textbf{Procedure} \textit{PTD} ($\alpha$, $\mathcal{D}$)}
\State Perform nearest neighbor search in \cite{friedman1977algorithm, yianilos1993data} to obtain $\alpha$ nearest neighbors for each frame in $\mathcal{D}$. Here, we denote $\alpha$ nearest neighbors of the $n$-th frame in $\mathcal{D}$ by the set $\mathcal{N}_n$.
\State Perform singular value decomposition over \cite{golub1970singular} on $\mathcal{N}_n$ and denote the descending order of singular values by $\sigma_1, \dots, \sigma_{\alpha}$ for some $\alpha \in \mathbb{N}$.
\State Compute the ratio $\sigma_{\alpha}/\sigma_1$ for $\mathcal{N}_n$ and denote by $(\sigma_{\alpha}/\sigma_1)_n$ where $n=1,\dots, N$.
\State Compute absolute moving difference of ratios, $t_n=|(\sigma_{\alpha}/\sigma_1)_{n+1}-(\sigma_{\alpha}/\sigma_1)_{n}|$, for $n=1,\dots,N-1$.
\State Compute $\alpha-$points moving sum, $\Sigma_n^{\alpha}=\sum_{i\in\left[n-\ceil{\alpha/2}, n+\ceil{\alpha/2}\right] \cap \mathbb{N}} t_i$, of $t_i$ for $n\in[\ceil{\alpha/2}, N-1-\ceil{\alpha/2}]\cap \mathbb{N}$, where $\ceil{\alpha/2}$ is the ceiling function.
\State Largest values of $\Sigma_n^{\alpha}$ are extracted as phase transitions.
\Statex{\textbf{end procedure}}
\end{algorithmic}
\end{algorithm*}

\section{Group behavioral examples}\label{sec:examples}
In this section, we evaluate the phase transition detection algorithm on four datasets: a synthetic dynamical simulation of the Vicsek model and three natural data sets; a crowd of people, a flock of birds, and a school of fish.

\subsection{A simulation of the Vicsek model} \label{sec:example_vicsek}
A swarm of self propelled particles with three imposed phase transitions simulated by the Vicsek model \cite{vicsek1995novel} is examined to investigate the sensitivity of the algorithm. This model outputs two-dimensional positions and orientations of particles in each time-step.

We modify the notation for set of nearest neighbors used in Section ~\ref{sub:local_approximation} as $\mathcal{N}_{n}^{(i)}$ to denote all the nearest-neighbors of the $i$-th particle within a unit distance at the $n$-th time-step. The Vicsek model \cite{vicsek1995novel} updates the orientation $\theta_{n}^{(i)}$ of the $i$-th particle at the $n$-th time step as
\begin{equation}\label{eqn:vicsek1}
\begin{split}
\theta_{n}^{(i)}=\arg{\left(\bs{V}_{n-1}^{(i)}\right)}+\epsilon_{n-1}^{(i)},\\
\text{where} \ \bs{V}_{n-1}^{(i)}=\frac{1}{\big|\mathcal{N}_{n-1}^{(i)}\big|} \sum_{j\in \mathcal{N}_{n-1}^{(i)}} \begin{bmatrix}\cos\left(\theta_{n-1}^{(j)}\right) \\ \sin\left(\theta_{n-1}^{(j)}\right)\end{bmatrix}
\end{split}
\end{equation}
and $\epsilon_{n-1}^{(i)}$ is the noise parameter sampled from a Gaussian distribution with mean zero and standard deviation $\sigma$. Here, $\bs{V}_{n-1}^{(i)}$ is the average direction of motion of all the particles in $\mathcal{N}_{n-1}^{(i)}$. The position $\bs{a}_{n}^{(i)}$ of the $i$-th particle at the $n$-th time step is therefore updated as
\begin{equation}\label{eqn:vicsek2}
\bs{a}_{n}^{(i)}=\bs{a}_{n-1}^{(i)}+s_{n-1}^{(i)} \begin{bmatrix}\cos\left(\theta_{n-1}^{(i)}\right) \\ \sin\left(\theta_{n-1}^{(i)}\right)\end{bmatrix} \delta, 
\end{equation} 
where $\delta$ is the time-step size and $s_{n-1}^{(i)}$ is the speed of the $i$-th particle at the time-step ($n-$1).

We run a simulation of this model and generate a synthetic data set of a particle swarm of 50 particles in 200 time-steps with periodic boundary conditions. We set $\delta=0.05$ and $s_{n}^{(i)}=0.1$ for all $i$ and $n$. We impose three phase transitions by changing the noise $\epsilon_{n}^{(i)}$, using a variable standard deviation
\begin{equation}
\sigma =
\begin{cases}
0.25, & \text{if }n<50, \\
1, & \text{if }51 \le n < 100,\\
0.05, & \text{if }101 \le n < 150,\\
0.75, & \text{if }n\ge 150,
\end{cases}
\end{equation}
for all $i$, in the Gaussian distribution. For $n=1, \dots, 200$, we convert point-mass positions of particles  at the $n$-th time-step, $\big\{\bs{a}_{n}^{(i)}\big\vert i=1, \dots, 50\big\}$, into a gray-colored frame such that the position of each particle is represented by a black square of pixels $10\times 10$. Therein, we first convert the positions of particles into a sparse matrix $A_n$, at each time step $n=1, \dots, 200$. Then we make a rotationally symmetric Gaussian low-pass filter $B$ of size $10\times 10$ pixels with standard deviation of 10 pixels and filter the matrix $A_n$ by $B$ to generate
\begin{equation}
C_n(n_1, n_2)=\sum^{10}_{k_1=1}\sum^{10}_{k_2=1}A_n(n_1-k_1, n_2-k_2)B(k_1, k_2) \ ; \ \forall \ n_1, n_2
\end{equation}
for $n=1, \dots, 200$ \cite{bracewell2003fourier}. Each matrix $C_n$ is converted into a frame and all the frames are finally converted into a one data matrix $\mathcal{D}$.

We run the phase transition detection algorithm upon $\mathcal{D}$ with several $\alpha$ values and observe that the distribution  $(\sigma_4/\sigma_1)_{n}$ generated by $\alpha=4$ (Fig. ~\ref{fig:example_viksec}(a)) shows clear phase changes with respect to noise changes. The plot of 20 largest phase changes (Fig. ~\ref{fig:example_viksec}(b)) reveals that three phase changes at frame numbers $150, 99$ and $50$ are significant than others and we consider them as phase transitions. Thus, the manifold representing the whole motion is partitioned into phase transition free 4 sub-manifolds ranging $1-50, 51-99, 100-150$ and $151-200$. We run Isomap on each range of frames with the neighborhood parameter four and obtain the residual plots given in Figure ~\ref{fig:example_viksec}(c). The Isomap residual plots indicating the embedding dimensionality by an elbow, reveal that the sub-manifolds are embedded in three, six, two, and four dimensions, respectively.

\begin{figure}[tp]
        	 \centering
        	\includegraphics[width=1\textwidth]{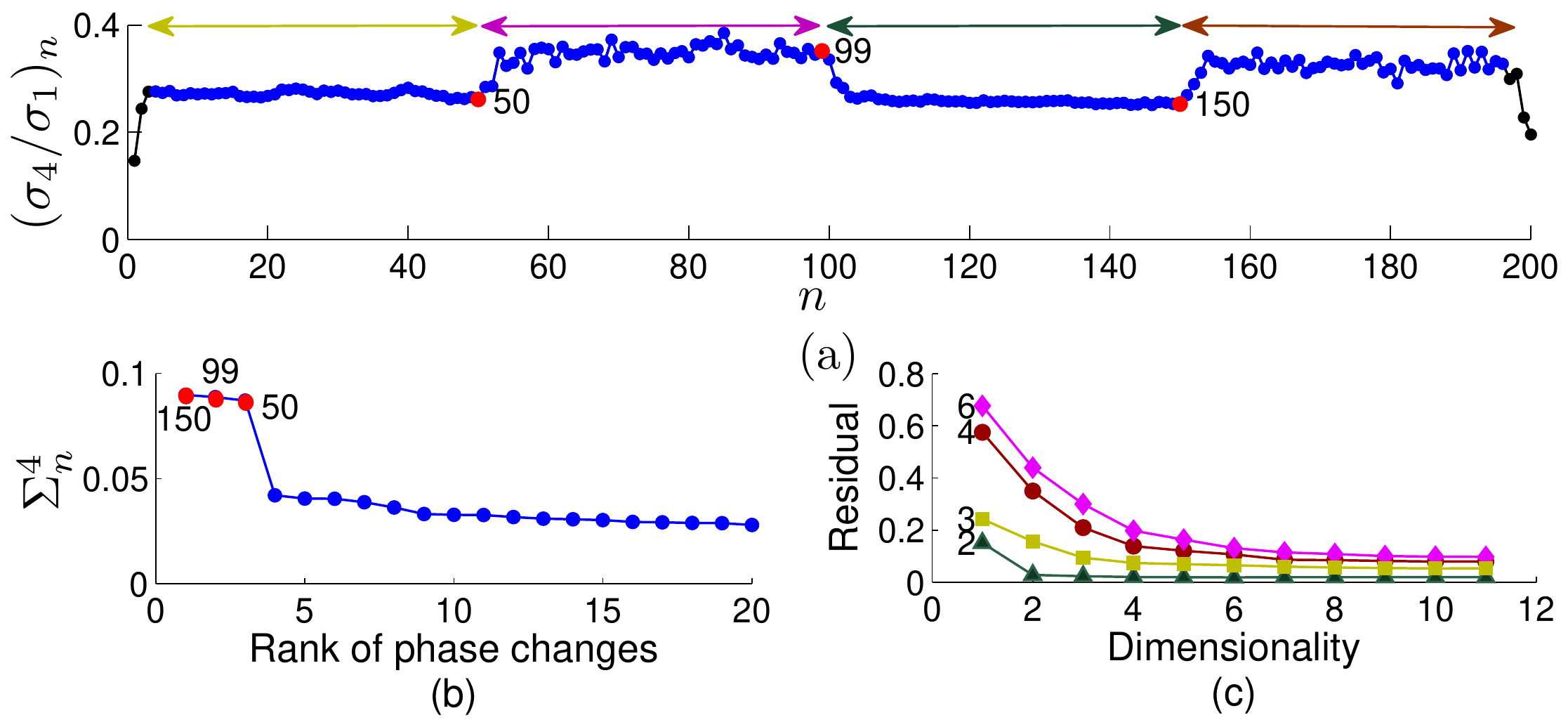}
        	\caption{Detecting phase transitions in a particle swarm simulated using the Vicsek model with alternating noise levels. (a) The distribution of $(\sigma_4/\sigma_1)_{n}$  versus frame numbers. Therein, the range of frames for each sub-manifold is represented by a left-right arrow and the frame number at each phase transition is represented by a red circle along with the frame number associated. (b) The plot of 20 largest phase changes including frame numbers of three phase transitions. (c) Isomap residual variance versus dimensionality of each sub-manifold.
}\label{fig:example_viksec} 
\end{figure}

\subsection{A human crowd}\label{sec:example_crowd}
A video of a human crowd available on-line at \cite{pet2009} containing a phase transition at the $69$-th frame from walk to run is considered now.

As in Example ~\ref{sec:example_vicsek}, we first convert all the frames into a data matrix $\mathcal{D}$, and then run phase transition detection algorithm on it. When $\alpha=3$, we observe that the distribution ($\sigma_3/\sigma_1)_n$ given in Figure ~\ref{fig:example_crowd}(a) shows clear trade-off about the frame numbers. Figure ~\ref{fig:example_crowd}(b) shows that only the first phase transition having a magnitude of $0.23$ is significant. The manifold representing the crowd is partitioned such that frames $1-69$ represent one sub-manifold and frames $70-90$ represent the other sub-manifold. The snapshots in Figure ~\ref{fig:example_crowd}(a) show the phases walking (left) and running (right) of the crowd. Isomap running on frames $1-69$ and $70-90$ with $\alpha=3$ reveals that the embedding dimensionalities of corresponding sub-manifolds are three and four, respectively. 

\begin{figure}[hpt]
\centering
\includegraphics[width=1\textwidth]{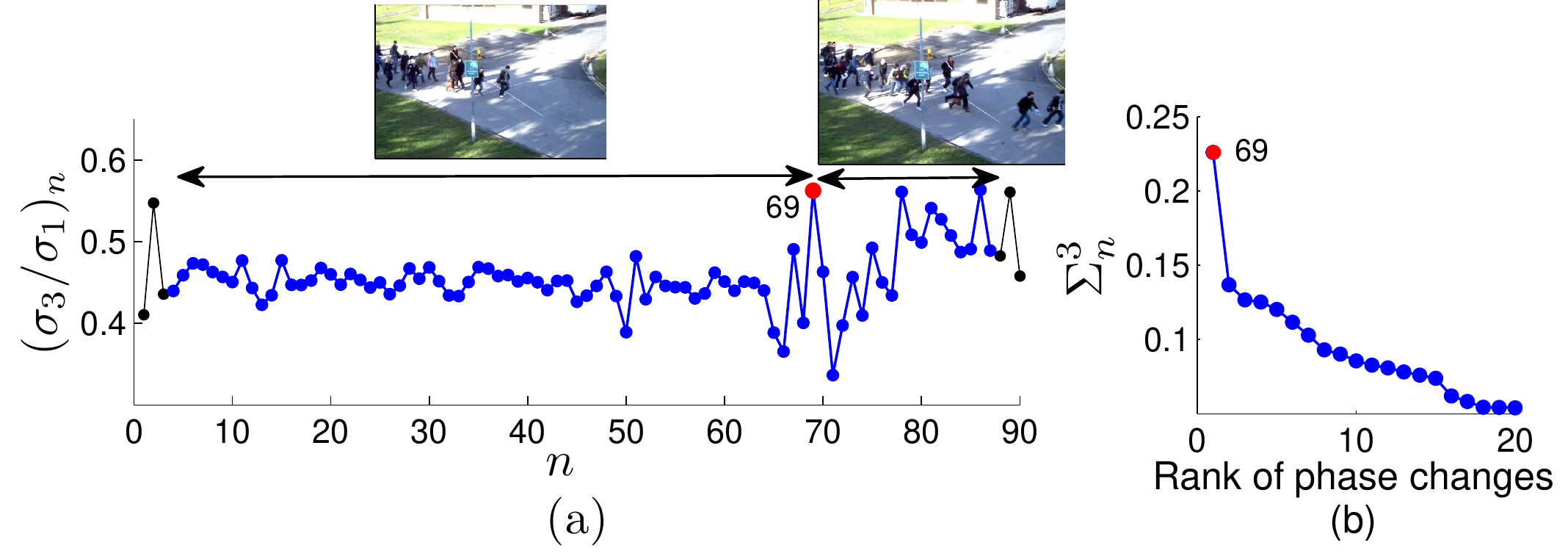}
\caption{Detecting a phase transition between phases of walking and running in a human crowd \cite{pet2009}. (a) The distribution of $(\sigma_3/\sigma_1)_{n}$ versus frame numbers. Therein, while the snapshots show instances of the crowd in each phase, left-right arrows and the red circle represent ranges of frames in each sub-manifold and the frame at the phase transition, respectively. (b) The plot of 20 largest phase changes representing the phase transition in red along with its frame number. 
}\label{fig:example_crowd} 
\end{figure}

\subsection{A bird flock}\label{sec:example_birds}
We now detect a phase transition, differentiating phases of sitting and flying, at the $58$-th frame in a video of a bird flock obtained on-line at \cite{birds2011}. 

We run phase transition detection algorithm on the data matrix with $\alpha=3$ and obtain the distribution ($\sigma_3/\sigma_1)_n$ in Figure ~\ref{fig:example_birds}(a). The plot of 20 largest phase changes in Figure ~\ref {fig:example_birds}(b) illustrates that the video consists one phase transition  at the $58$-th frame and has the magnitude of $0.32$. Then, the manifold is partitioned into two sub-manifolds representing frames in the ranges $1-58$ and $59-100$. Isomap running on aforesaid ranges with $\alpha=3$ reveals that the embedding dimensionalities of corresponding sub-manifolds are two and three, respectively.

\begin{figure}[hpt]
\centering      	
\includegraphics[width=1\textwidth]{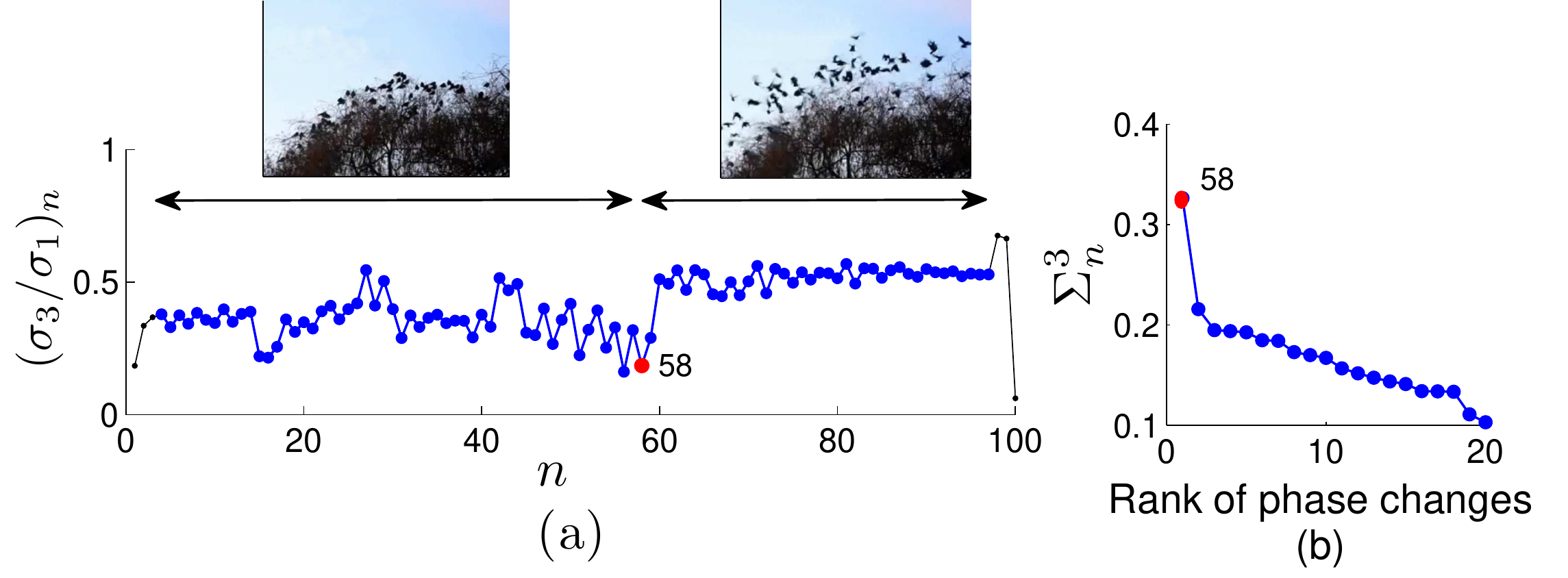}      
\caption{Detecting a transition in a bird flock between phases sitting and flying \cite{birds2011}. (a) The distribution of $(\sigma_3/\sigma_1)_{n}$ shows ranges of frames representing two sub-manifolds by left-right arrows and instances phases  of the flock by snapshots. (b) The plot of 20 largest phase changes. The frame at the phase transition is represented by red in Figures (a) and (b).}
\label{fig:example_birds} 
\end{figure}

\subsection{A fish school}\label{sec:example_fish}
Finally, we use a video of a fish school having few phase transitions to validate the method. This school is stimulated with panics and the behavior is recorded.

We execute the phase transition detection algorithm on the data matrix with $\alpha=6$ and obtain the distribution of $(\sigma_6/\sigma_1)_{n}$ (Fig. ~\ref{fig:example_fish}(a)). The plot of phase changes (Fig. ~\ref{fig:example_fish}(c)) reveals that the first four phase changes located at frames $40, 112, 185$ and $222$ are phase transitions. Now, the manifold is partitioned into phase transition free sub-manifolds representing frames in ranges $1-40, 41-112, 113-185, 186-222$ and $223-250$ as shown by left-right arrows in Figure ~\ref{fig:example_fish}(a). According to Figure ~\ref{fig:example_fish}(b), the pair of snapshots at the 40-th frame exhibits a large global abrupt change since the school reacts to the panic together. However, by the evidence observed through Figure ~\ref{fig:example_fish}(b) and snapshots,  the second and the third phase transitions are abrupt and local while the last is gradual and local. Isomap is run on all sub-manifolds with $\alpha=6$ and embedding dimensionalities are obtained as two, four, three, two, and six, respectively.

\begin{figure}[hpt]
\centering            	
\includegraphics[width=1\textwidth]{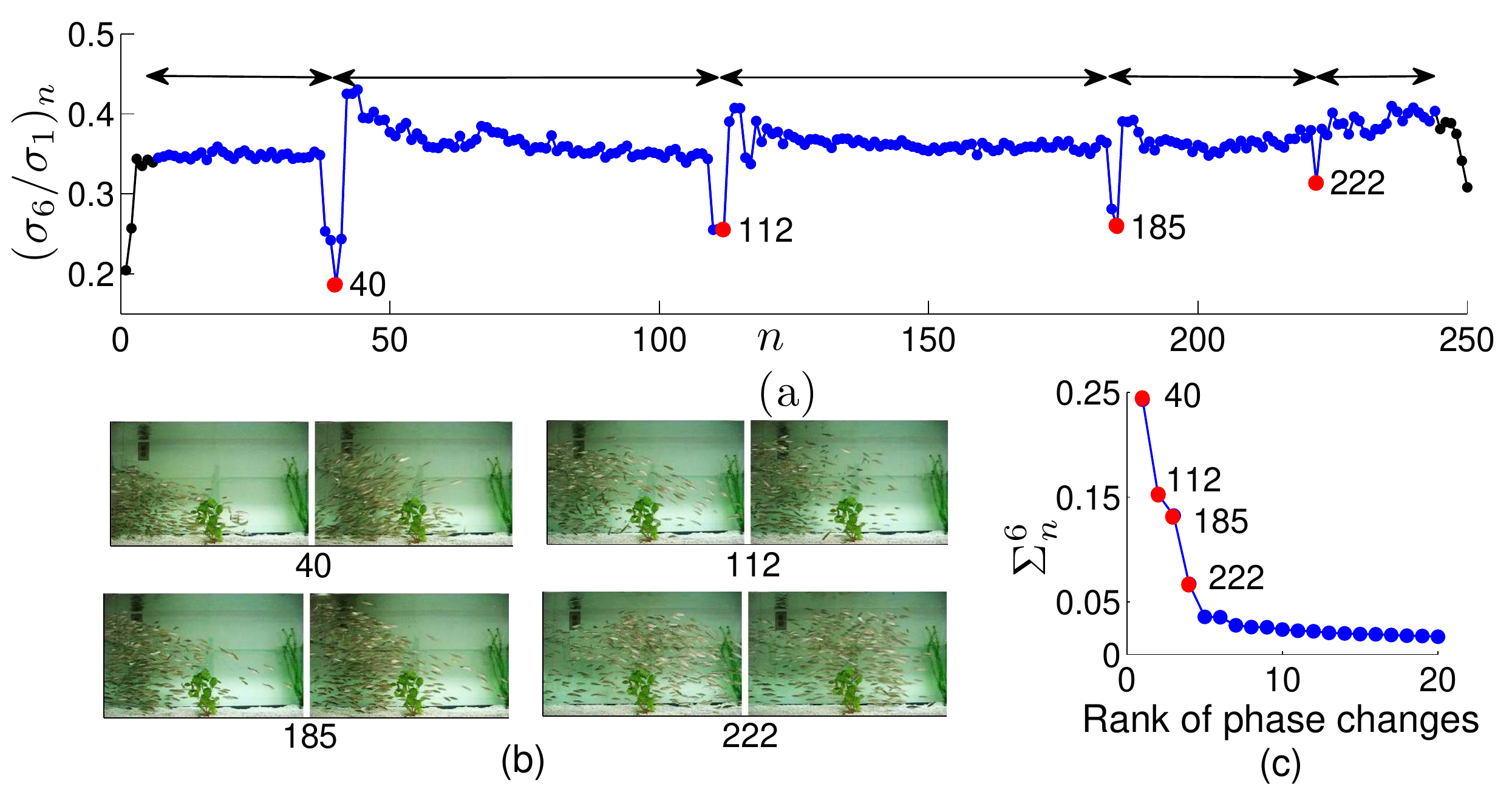}    	
\caption{Detecting phase transitions in a fish school. (a) The distribution of $(\sigma_6/\sigma_1)_{n}$  with left-right arrows showing ranges of frames in sub-manifolds and red dots showing frames at phase transitions. (b) Snapshots of the school before (left) and after (right) each phase transition. (c) The plot of 20 largest phase changes consisting four phase transitions marked in red with their frame numbers.}
\label{fig:example_fish} 
\end{figure}

\section{Conclusions and discussion} \label{sec:conclusion_discussion}
In this study, we proposed a robust method to detect phase transitions in collective behavior using ratio of singular values encountering high curvature of the corresponding underlying manifold. We empirically observe that a phase transition in collective behavior is represented as a locus of singularities on the manifold where the curvature is significantly high. Thus, we first introduced an assertion to approximate the curvature of a curve by means of singular value ratios computed on it and then we extended the assertion to higher dimensions using the shape operator. Finally, we asserted that the same phase transition can also be detected through singular value ratios computed over the local data distribution on the manifold.

We validated the method through four diverse examples; one from a simulation of the Vicsek model \cite{vicsek1995novel} containing three phase transitions, and the other three from natural instances of a crowd of people, a flock of birds, and a school of fish. We ran the phase transition detection algorithm on each data set with a predetermined nearest neighbor parameter ($\alpha$). Algorithm outputs the distribution $(\sigma_{\alpha}/\sigma_1)$ and 20 largest phase changes which are ranked according their magnitudes. Based on these outputs, manifold representing the whole collective behavior is partitioned into phase transitions free sub-manifolds.

In Example ~\ref{sec:example_vicsek}, we simulated a training data set of a particle swam consisting known phase transitions to test the method's performance. Noteworthy shifts of the distribution $(\sigma_4/\sigma_1)_n$ from one frame to the other justify the sensitivity of the method. Therein, our method was capable of detecting the exact phase transitions at frames 50 and 150, however it approximated the phase transition at $100$-th frame as at $99$-th frame with one frame of error. Since this approximation is accurate enough, we partitioned the manifold into four sub-manifolds about frames 50, 99, and 150. Thus, we conclude that the method is adept at perceiving the nature of the collective behavior and spatial distribution of particles to approximate phase transitions. 

The algorithm running on a fish school consisting four phase transitions (Ex. ~\ref{sec:example_fish}) ranked their magnitudes such that it provides a good comparison between them. Embedding dimensionalities of sub-manifolds in each example revealed by isomap affirm that the data is embedded in distinct smooth sub-manifolds which are joined at singularities. Examples ensure the applicability of the method for variety of data sets ranging from simulations to natural, as it requires one input parameter $\alpha$.

As we detect phase transitions based on the curvature which is a local property on the manifold, choosing the best value $\alpha$ is always important for the method to extract correct phase traditions. This is done by either using the prior-knowledge about the data or running the algorithm with several $\alpha$'s to determine the best value which differentiates and highlights phase transitions. We can always rely on a small $\alpha$ value when the data on the manifold is sufficiently dense \cite{tenenbaum2000global}. Since the current techniques of making videos with high frame rates, we can always generate sufficiently large quantity of frames those would then be densely represented as data points on a manifold.  Thus, in this study, we run the phase transition detection algorithm with small $\alpha$ values taken to be all natural numbers less than 10 and then decide the best value by analyzing corresponding phase change plots.

In future, we will fabricate an automated method to determine the best $\alpha$ value which will give the best phase change plot to extract phase transitions. We will also establish a novel approach of detecting phase transitions under gradual changes of the behavior of a multi-agent system which is not addressed in this work.

Here we presented a phase transitions detection method of multi-agent systems usin g the curvature of the  manifold representing the system. The method was validated on several instances ranging from simulation to natural to justify the broad applicability and the accuracy.

\section*{Acknowledgments}
Kelum Gajamannage and Erik M. Bollt are supported by the National Science Foundation under the grant number CMMI- 1129859. Erik M. Bollt was also supported by the Army Research Office under the grant number W911NF-12-1-276 and Office of Naval Research under the grant number N00014-15-2093.

\begin{appendix}
\section{Computing principal sections from the shape operator.} \label{app:shape_operator}
Here, we provide an example of computing principal sections of a two dimensional manifold in $\mathbb{R}^3$ called a saddle surface given in the parametric form
\begin{equation}\label{eqn:2D_saddle}\mathcal{M}^2=(x_1, x_2, x_1^3-3x_1x_2^2) \in \mathbb{R}^3,\end{equation}
shown by Figure ~\ref{fig:principal_curvature_saddle}. 
\begin{figure}[htp]
\centering
\includegraphics[width=.5\textwidth]{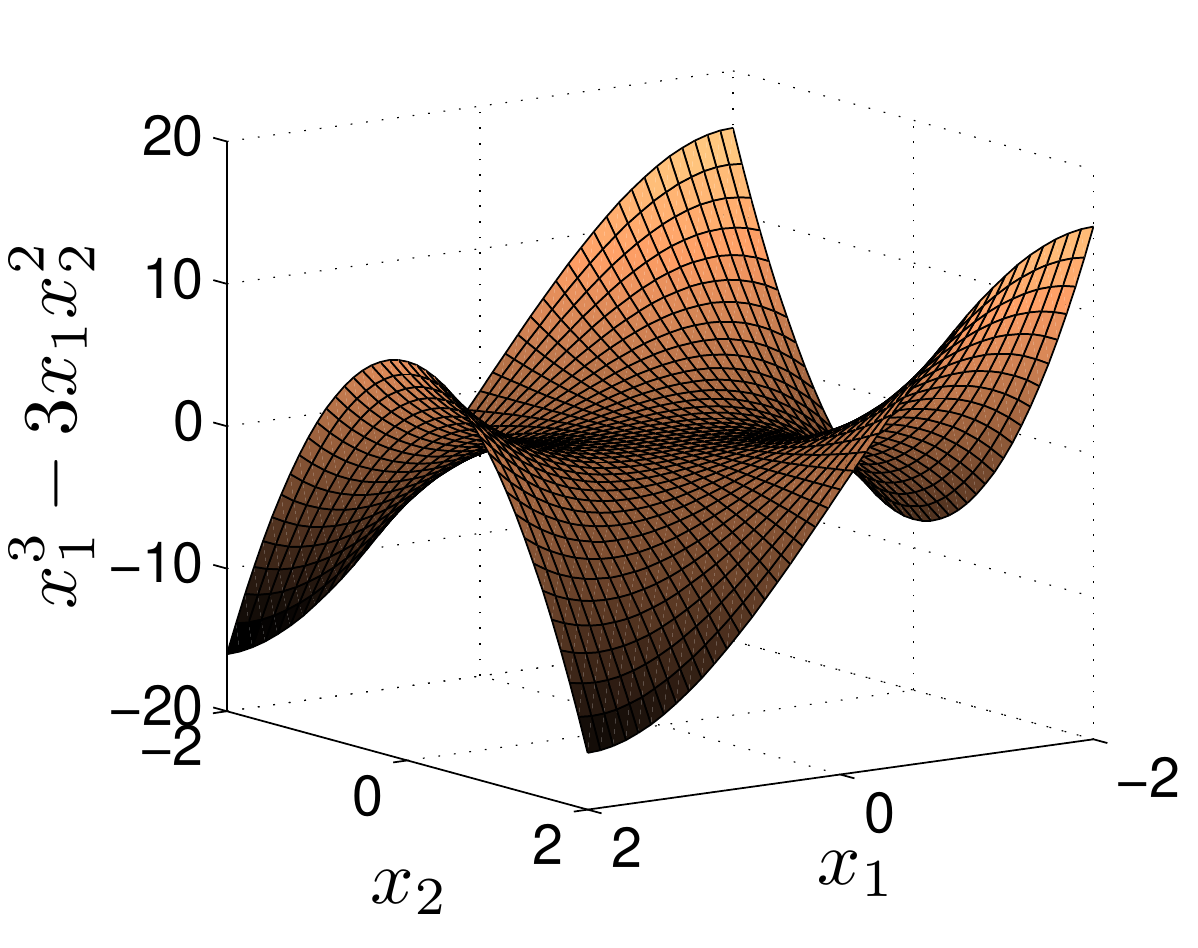}
\caption{Two dimensional saddle surface $\mathcal{M}^2$, described by the Equation (\ref{eqn:2D_saddle}) for $x_1, x_2 \in \mathbb{U} [-2, 2]$.}
\label{fig:principal_curvature_saddle}
\end{figure}

For an arbitrary point $\bs{p}=(x_1, x_2, x_1^3-3x_1x_2^2)$ on the surface, we compute unit tangential vectors as
\begin{equation}\label{eqn:first_tangents}
\bs{v}_{\bs{p}}^{(1)}=\frac{\partial \mathcal{M}^2}{\partial x_1}=\big(1, 0, 3\big[x_1^2-x_2^2\big]\big) \implies \hat{\bs{v}}_{\bs{p}}^{(1)}=\frac{\bs{v}_{\bs{p}}^{(1)}}{\big|\bs{v}_{\bs{p}}^{(1)}\big|}=\frac{\big(1, 0, 3\big[x_1^2-x_2^2\big]\big)}{\sqrt{9\big(x_1^2-x_2^2\big)^2+1}},
\end{equation}
and
\begin{equation}\label{eqn:second_tangents}
\hspace{20pt} \bs{v}_{\bs{p}}^{(2)}=\frac{\partial \mathcal{M}^2}{\partial x_2}=\big(0, 1, -6x_1x_2\big) \implies \hat{\bs{v}}_{\bs{p}}^{(2)}=\frac{\bs{v}_{\bs{p}}^{(2)}}{\big|\bs{v}_{\bs{p}}^{(2)}\big|}=\frac{\big(0, 1, -6x_1x_2\big)}{\sqrt{36x_1^2x_2^2+1}}.
\end{equation} 
Then, while the unit normal at $\bs{p}$, $N_{\bs{p}}$, is
\begin{equation}N_{\bs{p}}=\frac{\bs{v}_{\bs{p}}^{(1)}\times \bs{v}_{\bs{p}}^{(2)}}{\big|\bs{v}_{\bs{p}}^{(1)}\times \bs{v}_{\bs{p}}^{(2)}\big|}=\frac{\big(-3\big[x_1^2-x_2^2\big], 6x_1x_2, 1\big)}{\sqrt{9\big(x_1^2+x_2^2\big)^2+1}},\end{equation}
tangential directions are
\begin{equation}\label{eqn:x_derivative_of_n}\nabla_{x_1} N_{\bs{p}}=\frac{\partial N_{\bs{p}}}{\partial x_1}=\frac{-6\left(x_1\left[18x_2^2\{x_1^2+x_2^2\}+1\right] , x_2\left[9\{x_1^4-x_2^4\}-1\right] , 3x_1\left[x_1^2+x_2^2\right]\right)}{\big[9\big(x_1^2+x_2^2\big)^2+1\big]^{3/2}},
\end{equation}
and
\begin{equation}\label{eqn:y_derivative_of_n}\nabla_{x_2} N_{\bs{p}}=\frac{\partial N_{\bs{p}}}{\partial x_2}=\frac{6\left(x_2\left[18x_1^2\{x_1^2+x_2^2\}+1\right] , x_1\left[9\{x_1^4-x_2^4\}+1\right] , -3x_2\left[x_1^2+x_2^2\right]\right)}{\big[9\left(x_1^2+x_2^2\right)^2+1\big]^{3/2}}.
\end{equation}
By Equations (\ref{eqn:first_tangents}), (\ref{eqn:second_tangents}), (\ref{eqn:x_derivative_of_n}), and (\ref{eqn:y_derivative_of_n}), we compute
\begin{equation}\mathcal{S}_{\bs{p}}^{(1,1)}=-\nabla_{x_1} N_{\bs{p}} \cdot \hat{\bs{v}}_{\bs{p}}^{(1)}=\frac{6x_1}{\sqrt{[9(x_1^2+x_2^2)^2+1][9(x_1^2-x_2^2)^2+1]}},
\end{equation}
\begin{equation}\mathcal{S}_{\bs{p}}^{(1,2)}=-\nabla_{x_1} N_{\bs{p}} \cdot \hat{\bs{v}}_{\bs{p}}^{(2)}=\frac{-6x_2}{\sqrt{[9(x_1^2+x_2^2)^2+1][36x_1^2x_2^2+1]}},
\end{equation}
\begin{equation}
\mathcal{S}_{\bs{p}}^{(2,1)}=-\nabla_{x_2} N_{\bs{p}} \cdot \hat{\bs{v}}_{\bs{p}}^{(1)}=\frac{-6x_2}{\sqrt{[9(x_1^2+x_2^2)^2+1][9(x_1^2-x_2^2)^2+1]}},
\end{equation} 
\begin{equation}\mathcal{S}_{\bs{p}}^{(2,2)}=-\nabla_{x_2} N_{\bs{p}} \cdot \hat{\bs{v}}_{\bs{p}}^{(2)}=\frac{-6x_1}{\sqrt{[9(x_1^2+x_2^2)^2+1][36x_1^2x_2^2+1]}}.
\end{equation} 
Particularly, the shape operator $\mathcal{S}_{\bs{p}}=\begin{pmatrix}\mathcal{S}_{\bs{p}}^{(1,1)} & \mathcal{S}_{\bs{p}}^{(1,2)} \\ \mathcal{S}_{\bs{p}}^{(2,1)} & \mathcal{S}_{\bs{p}}^{(2,2)}\end{pmatrix}$, at $\bs{p}=(1, 0, 1)$ is
\begin{equation}
\mathcal{S}_{(1,0,1)}=\begin{pmatrix}3/5 & 0 \\ 0 & -6/\sqrt{10}\end{pmatrix}.
\end{equation}

Computed eigenpairs, $(3/5, (1, 0))$ and $(-6/\sqrt{10}, (0, 1))$, of $\mathcal{S}_{(1,0,1)}$ describe two pairs of principal curvatures and directions in the tangential space at $\bs{p}=(1, 0, 1)$. These two dimensional principal directions are made to be three dimensions as $(1,0,0)$ and $(0,1,0)$ by introducing the third dimension. The unit normal at $\bs{p}=(1, 0, 1)$ is  $N_{(1,0,1)}=\frac{1}{\sqrt{10}}(-3,0,1)$, thus the principal sections in $\mathbb{R}^3$ at $\bs{p}$ are given by
\begin{equation}
\begin{split}
\Pi^{(1)}_{(1,0,1)}=\left\{\beta (1,0,1) + \gamma (-3,0,1) \vert, \forall \ \gamma, \beta \in \mathbb{R}\right\} \text{and}\\
 \Pi^{(2)}_{(1,0,1)}=\left\{\beta (0,1,1) + \gamma (-3,0,1) \vert, \forall \ \gamma, \beta \in \mathbb{R}\right\}.
\end{split}
\end{equation}

\end{appendix}


\medskip
Received on September 23, 2015. Revised on May 31, 2016.
\medskip

\end{document}